\renewcommand{\phi}[0]{\varphi}
\renewcommand{\theta}[0]{\vartheta}
\renewcommand{\epsilon}[0]{\varepsilon}
\newcommand{\N}{\text{$\mathbf{N}$}}
\newcommand{\Z}{\text{$\mathbf{Z}$}}
\newcommand{\Q}{\text{$\mathbf{Q}$}}
\newcommand{\Pro}{\text{$\mathbb{P}^1$}}
\newcommand{\F}{\text{$\mathbb{F}$}}
\newtheorem{theorem}{Theorem}[section]
\newtheorem{lemma}[theorem]{Lemma}
\theoremstyle{definition}
\newtheorem{example}[theorem]{Example}
\theoremstyle{remark}
\newtheorem{remark}[theorem]{Remark}
\numberwithin{equation}{section}
\begin{document}
\bibliographystyle{amsplain}

\date{}

\keywords{Endomorphisms of elliptic curves, iterative constructions, irreducible polynomials, finite fields}
\subjclass[2010]{11T55,11G20}
\title[]
{On the construction of irreducible polynomials over finite fields via odd prime degree endomorphisms of elliptic curves}

\author{S.~Ugolini}
\email{s.ugolini@unitn.it} 

\begin{abstract}
In this paper we present an iterative construction of irreducible polynomials  over finite fields based upon repeated applications of transforms induced by endomorphisms of odd prime degree of ordinary elliptic curves.  
\end{abstract}

\maketitle

\section{Introduction}
Several iterative constructions of irreducible polynomials with coefficients over finite fields have been proposed over the last decades. A comprehensive survey of such constructions can be found in \cite[Section 3.2]{hff}. 

Among the pioneering articles dealing with these constructions we would like to mention \cite{coh} and \cite{mey}, where the authors dealt with the so-called $Q$- and $R$-transform. Starting from an irreducible polynomial $f_0$ satisfying certain conditions on the coefficients and applying repeatedly one of these transforms it is possible to construct an infinite sequence of irreducible polynomials $\{ f_i \}_i$ such that 
\begin{equation*}
\deg(f_{i+1}) = 2 \cdot \deg(f_i)
\end{equation*}
for any $i$. In two  papers \cite{seq2, seqq} we showed how the constructions proposed by Meyn and Cohen behave when we remove any additional condition on the initial polynomial of the sequence, while in \cite{seqe} we  proposed a generalization of the $Q$- and $R$-transforms which is related to certain degree-$2$ endomorphisms of elliptic curves.

In the papers so far mentioned the degree of any polynomial of the sequence is (at most) twice the degree of the previous polynomial. The rationale behind the current paper is to rely on odd prime degree endomorphisms of elliptic curves to produce sequences of irreducible polynomials $\{ f_i \}_i$ over finite fields such that 
\begin{equation*}
\deg(f_{i+1}) = l \cdot \deg(f_i)
\end{equation*}
for a fixed odd prime $l$, at least for sufficiently large indices $i$. The iterative procedure introduced in Section \ref{sec_it_proc} is based upon the $r$-transform defined at the beginning of the same section. We notice that the synthesis of the first terms of the sequence might require some factorizations of a polynomial into its irreducible factors. By the way, as explained in Section \ref{sec_ana}, starting from an index $j$ it is possible to generate inexpensively all the polynomials of the sequence just setting $f_{i+1} := f_{i}^{r}$ for any $i \geq j$. 
\section{Background}\label{sec_bac}
Let $E$ be an ordinary elliptic curve defined over a finite field $\F_{p^n}$, where $p$ is an odd prime and $n$ a positive integer. We denote by $\pi_{{p^n}}$ the $\pi_{p^n}$-Frobenius endomorphism of $E$ over $\F_{p^n}$ and by $R$ its endomorphism ring, which we suppose to be a maximal order of a quadratic imaginary field $K = \Q (\sqrt{D})$ for some negative integer $D$.

We notice in passing that the requirement on the endomorphism ring of $E$ is not too restrictive. Indeed, one can construct ordinary elliptic curves over finite fields by reduction modulo a prime of elliptic curves defined over $\Q$. According to \cite[Remark 11.3.2]{sil}, up to isomorphism over $\overline{\Q}$ there are exactly $13$ elliptic curves over $\Q$ having complex multiplication. Moreover, $9$ of such elliptic curves have complex multiplication in the full ring of integers $\mathcal{O}_K$, where $K = \Q (\sqrt{D})$ for some $D$ in the set
\begin{equation*}
\{-1, -2, -3, -7, -11, -19, -43, -67, -163 \}.
\end{equation*}

We denote by $\alpha(x,y) = \left(r(x), y \cdot s (x) \right)$  an endomorphism of $E$ over $\F_{{p^n}}$ having prime degree $l$ coprime to the discriminant of $K$, in such a way that the kernel of $\alpha$ contains points of $E(\F_{p^n})$, where $r(x)$ and $s(x)$ are two rational functions in $\F_{p^n} (x)$ and
\begin{equation*}
r (x) = \frac{a(x)}{b(x)} 
\end{equation*}   
for certain polynomials $a(x)$ and $b(x)$ in $\F_{p^n} [x]$ such that $a(x)$ is monic. 

In the following we will denote by $\alpha$ also the representation of the endomorphism $\alpha$ in $R$. 

Let $d$ be a positive integer.

We introduce the following notations:
\begin{itemize}
\item $q:=p^{2nd}$;
\item $\rho:=\pi_{p^n}^{2d}$;
\item $\delta_i := \rho^{l^i}$, for any $i \in \N$.
\end{itemize}

In accordance with \cite[Theorem 1]{len}  we have that 
\begin{equation*}
E(\F_{q^{l^i}}) \cong R /(\delta_i-1) R
\end{equation*}
for any $i \in \N$.
Therefore we can study the action of $r$ on the points of $\Pro(\F_{q})$ by means of the action of $\alpha$ on the points in  
\begin{equation*}
R / (\delta_0-1) R.
\end{equation*}

For any $i \in \N$ we can find an element $\gamma_i$ in $R$ coprime to $\alpha$ and a positive integer $k_i$ such that
\begin{equation*}
\delta_i - 1 = \alpha^{k_i} \cdot \gamma_i.
\end{equation*} 
We define 
\begin{equation*}
\nu_{\alpha} (\delta_i-1) := k_i,
\end{equation*}
namely we denote by $\nu_{\alpha} (\delta_i-1)$ the exponent of the greatest power of $\alpha$ which divides $\delta_i-1$. 
 
We have that
\begin{equation*}
R / (\delta_i-1) R  \cong S_i,
\end{equation*}
where 
\begin{equation*}
S_i := R / \alpha^{k_i} R \times R / \gamma_i R.
\end{equation*}

In \cite{ugen} we investigated the structure of the graphs associated with the iterations of rational maps induced by endomorphisms of ordinary elliptic curves defined over finite fields. The graph associated with $r$ over $\F_q$ can be constructed as follows: we label the vertices by the elements of $\Pro (\F_q) = \F_q \cup \{ \infty \}$ and join a vertex $x_1$ with a vertex $x_2$ if $x_2 = r(x_1)$. In every connected component of the resulting graph there is exactly one cycle and a tree is attached to any vertex of the cycle.  

The following holds for the structure of the trees rising on the cycles of the graph $G^{q}_r$ associated with the dynamics of $r$ over $\Pro(\F_q)$.

\begin{theorem}\label{tree_depth}
Let $T^{q}_r (\tilde{x})$ be a tree of $G^{q}_r$ rooted in $\tilde{x} \in \Pro (\F_{p^{nd}})$. Then $T^{q}_r (\tilde{x})$ has depth $k_0$ and any leaf of the tree has height $k_0$.
\end{theorem}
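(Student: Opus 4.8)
The plan is to lift the dynamics of $r$ on $\Pro(\F_q)$ to the dynamics of $\alpha$ on the abelian group $E(\F_q)$, to read off the tree structure from the module decomposition $E(\F_q)\cong S_0 = R/\alpha^{k_0}R \times R/\gamma_0 R$, and finally to push the result back down through the degree-two $x$-coordinate quotient. Concretely, since $\alpha(x,y)=(r(x),y\,s(x))$ we have $x(\alpha P)=r(x(P))$ for every $P\in E(\F_q)$, so the $x$-coordinate map $x\colon E \to \Pro$ intertwines the action of $\alpha$ with that of $r$ and realises $\Pro(\F_q)$ as the quotient $E(\F_q)/\{\pm 1\}$. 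Under Lenstra's isomorphism $E(\F_q)\cong R/(\delta_0-1)R$ the endomorphism $\alpha$ becomes multiplication by $\alpha$, and the splitting into $R/\alpha^{k_0}R$ and $R/\gamma_0 R$ decouples this multiplication into two independent parts.

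First I would analyse these two parts separately. On $R/\gamma_0 R$ multiplication by $\alpha$ is bijective, because $\gamma_0$ is coprime to $\alpha$; this is the periodic direction, and its component decides on which cycle, hence in which tree, a point lies. On $R/\alpha^{k_0}R$ multiplication by $\alpha$ is nilpotent of index exactly $k_0$: writing an element as $u=\alpha^{k_0-m}w$ with $w$ a unit, the least $m$ with $\alpha^{m}u=0$, which I denote $\mathrm{ord}_\alpha(u)$, equals $m$. A point is periodic precisely when its $R/\alpha^{k_0}R$-component vanishes, so the roots of the trees are exactly these points, and I would use the hypothesis $\tilde{x}\in\Pro(\F_{p^{nd}})$ to identify $\tilde x$ with such a cycle vertex. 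I then claim that the height of a tree vertex above its root equals $\mathrm{ord}_\alpha(u)$, since applying $\alpha$ exactly $\mathrm{ord}_\alpha(u)$ times carries the vertex to the cycle. The depth is therefore $k_0$, attained by the units of $R/\alpha^{k_0}R$, and every element $u=\alpha^{k_0-m}w$ with $m<k_0$ admits the $\alpha$-preimage $\alpha^{k_0-m-1}w$ of order $m+1$; hence on $E$ every vertex of height $<k_0$ has a preimage and only the height-$k_0$ vertices are leaves.

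It then remains to descend to $\Pro(\F_q)$ through the involution $[-1]$. Since $[-1]$ commutes with $\alpha$ and is multiplication by a unit of $R$ coprime to $\alpha$, it preserves $\mathrm{ord}_\alpha$, so $P$ and $-P$ sit at the same height; an $\alpha$-preimage of $P$ on $E$ descends to an $r$-preimage of $x(P)$, so fullness is not lost and the image of a height-$k_0$ point is again a height-$k_0$ point. I would split into the two cases for the root $(0,c_0)$: when $c_0\neq -c_0$ the two $E$-trees rooted at $(0,c_0)$ and $(0,-c_0)$ are disjoint and interchanged by $[-1]$, so the quotient tree is graph-isomorphic to a single $E$-tree; when $2c_0=0$ the single $E$-tree is mapped to itself, and one checks $[-1]$ acts freely on its non-root vertices because $2$ is invertible modulo $\alpha^{k_0}$ (here $l$ odd is used). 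In either case the depth stays $k_0$ and all leaves remain at height $k_0$.

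The main obstacle is exactly this descent. One must rule out that the $\pm$-identification shortens a tail or creates a shorter cycle, which would amount to a relation $\alpha^{j}P=-P$ for a tree vertex; reducing modulo the $R/\alpha^{k_0}R$-component, such a relation forces the nilpotent element $u$ to be $\alpha$-periodic and hence $u=0$, so it can only occur at cycle points and never genuinely disturbs a tree. The delicate points are therefore the ramification of $x$, namely the $2$-torsion of $E$ and the point at infinity, together with the case $2c_0=0$; establishing the freeness of $[-1]$ on tree vertices there, via the coprimality of $\alpha$ with $2$ and with the discriminant of $K$, is the step I expect to require the most care. The preservation of heights and the counting of children will then follow routinely.
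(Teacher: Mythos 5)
Your proposal is correct and takes essentially the same route as the paper: both read off heights from the decomposition $S_0 = R/\alpha^{k_0}R \times R/\gamma_0 R$, identifying the $r$-periodic points with those whose $R/\alpha^{k_0}R$-component vanishes and the height of a vertex with the $\alpha$-order of that component, so that a vertex fails to be a leaf exactly when its first component is divisible by $\alpha$. The only difference is one of thoroughness: the paper passes silently between vertices of $G^{q}_r$ and points of $S_0$, whereas you make the descent through the $x$-coordinate quotient (the $[-1]$-identification) explicit.
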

\begin{proof}
First we notice that any vertex $x' \in T^{q}_r (\tilde{x})$ has height at most $k_0$. In fact, we notice that  any $r$-periodic element in $G^q_r$ is the $x$-coordinate of a point of the form $(0,P_1)$ in $S_0$. If we consider a point $P=(P_0,P_1)$ in $S_0$, then $[\alpha]^{k_0} (P) = (0, [\alpha]^{k_0} P_1)$, namely $[\alpha]^{k_0} (P)$ is a point of $R /({\delta_0}-1) R$ whose $x$-coordinate is $r$-periodic.  

Now consider a leaf $x' \in T^q_r (\tilde{x})$ and its corresponding point $P =([a], [b])$ in $S_0$. If $\alpha \mid a$, then $a = \alpha \cdot a'$ for some $a' \in R$. Moreover $[\alpha] (Q) = P$, where $Q=([a'],[a]^{-1} [b])$ and the $x$-coordinate $x_Q$ of $Q$ is such that $r(x_Q) = x'$ in contradiction with the fact that $x'$ is a leaf of $T^q_r(\tilde{x})$. Hence we deduce that $a$ is not divisible by $\alpha$ and the smallest integer $m$ such that $[\alpha]^m [a] = 0$ is $k_0$, namely $x'$ has height $k_0$.
\end{proof}

The following holds.
\begin{lemma}\label{val_lem}
Let $e$ be an integer such that $1 \leq e \leq l$. 

If $\eta = \delta_i$ for some $i \in \N$, then
\begin{displaymath}
\nu_{\alpha} (\eta^e-1) =
\begin{cases}
\nu_{\alpha} (\eta-1) & \text{ if $e < l$,}\\
\nu_{\alpha} (\eta-1) +1 & \text{ if $e = l$.}
\end{cases} .
\end{displaymath}
\end{lemma}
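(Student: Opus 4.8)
The plan is to factor $\eta^e - 1 = (\eta-1)\cdot \Phi_e(\eta)$, where $\Phi_e(\eta) := \sum_{j=0}^{e-1} \eta^j = (\eta^e-1)/(\eta-1)$, and to exploit that $\nu_{\alpha}$ is a genuine discrete valuation. Indeed, since $\alpha$ has degree $l$, its norm is $l$, so the ideal $\alpha R$ has norm $l$ and is therefore a prime ideal of the maximal order $R$, with residue field $R/\alpha R \cong \F_l$. In particular $\nu_{\alpha}$ is additive on products, and (noting that $\eta = \delta_i = \rho^{l^i}$ is a power of the Frobenius, hence not a root of unity, so $\eta^e \neq 1$ and all valuations are finite) we obtain $\nu_{\alpha}(\eta^e-1) = \nu_{\alpha}(\eta-1) + \nu_{\alpha}(\Phi_e(\eta))$. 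It thus suffices to prove that $\nu_{\alpha}(\Phi_e(\eta)) = 0$ for $e < l$ and $\nu_{\alpha}(\Phi_e(\eta)) = 1$ for $e = l$.

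Before doing so I would record two arithmetic facts. First, because $\eta = \delta_i$, the hypothesis that $k_i = \nu_{\alpha}(\delta_i - 1)$ is a \emph{positive} integer gives $\eta \equiv 1 \pmod{\alpha R}$ with $\nu_{\alpha}(\eta-1) = k_i \geq 1$. Second, since $\deg \alpha = l$ is coprime to the discriminant of $K$, the prime $l$ is unramified and splits in $R$, so that (up to a unit) $l = \alpha \cdot \overline{\alpha}$ with $\alpha$ and $\overline{\alpha}$ coprime; consequently $\nu_{\alpha}(l) = 1$, and a rational integer lies in $\alpha R$ precisely when it is divisible by $l$, because the composite $\Z \to R/\alpha R \cong \F_l$ has kernel $l\Z$.

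For $1 \leq e < l$ I would simply reduce modulo $\alpha R$: from $\eta \equiv 1$ one gets $\Phi_e(\eta) \equiv e \pmod{\alpha R}$. As $1 \leq e \leq l-1$ is not divisible by $l$, it is not divisible by $\alpha$, so $\Phi_e(\eta) \not\equiv 0 \pmod{\alpha R}$ and hence $\nu_{\alpha}(\Phi_e(\eta)) = 0$. This yields $\nu_{\alpha}(\eta^e-1) = \nu_{\alpha}(\eta-1)$, as claimed.

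The delicate case is $e = l$, and this is where I expect the real work to lie: one must show the valuation is exactly $\nu_{\alpha}(\eta-1)+1$, not merely at least that. Writing $\eta = 1 + \beta$ with $\nu_{\alpha}(\beta) = k_i \geq 1$ and expanding, $\Phi_l(\eta) = \sum_{j=0}^{l-1}(1+\beta)^j = l + \binom{l}{2}\beta + (\text{terms of } \nu_{\alpha} \geq 2k_i)$. Now $\nu_{\alpha}(l) = 1$ by the splitting recorded above, while the linear term satisfies $\nu_{\alpha}\!\left(\binom{l}{2}\beta\right) = \nu_{\alpha}(l) + \nu_{\alpha}(\tfrac{l-1}{2}) + \nu_{\alpha}(\beta) = 1 + 0 + k_i \geq 2$ (here $\tfrac{l-1}{2}$ is an integer not divisible by $l$, hence an $\alpha$-unit), and every higher-order term has valuation $\geq 2k_i \geq 2$. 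Since the constant term $l$ has strictly smaller valuation than all the rest, the strict triangle inequality for $\nu_{\alpha}$ forces $\nu_{\alpha}(\Phi_l(\eta)) = 1$, whence $\nu_{\alpha}(\eta^l - 1) = \nu_{\alpha}(\eta-1) + 1$. The crux is precisely this separation of valuations: it hinges on $l$ being split (so that $\nu_{\alpha}(l) = 1$) together with $\nu_{\alpha}(\beta) \geq 1$, which jointly push every non-constant term to valuation at least $2$ and thereby isolate the contribution of the constant term $l$.
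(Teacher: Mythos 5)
Your proposal is correct and follows essentially the same route as the paper: both factor $\eta^e-1=(\eta-1)\Phi_e(\eta)$, handle $e<l$ by observing $\Phi_e(\eta)\equiv e\not\equiv 0\pmod{\alpha R}$, and handle $e=l$ by isolating the constant term $l$ (with $\nu_{\alpha}(l)=1$) against all remaining contributions of valuation at least $2$. Your binomial expansion in $\beta=\eta-1$ is just a repackaging of the paper's manipulation, and your added justification that $\alpha R$ is prime of norm $l$ and that $l$ splits (so $\nu_{\alpha}(l)=1$) makes explicit a fact the paper uses without comment.
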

\begin{proof}
Whichever the value of $e$ is, we have that
\begin{equation}
\eta^e - 1 = (\eta -1) (\eta^{e-1}+ \dots + \eta +1).
\end{equation}

If $e < l$, then 
\begin{eqnarray*}
\eta^{e-1}+ \dots + \eta +1 & = & \sum_{j=1}^{e-1} (\eta^j-1) + e.
\end{eqnarray*}
Since $\nu_{\alpha} (\eta^j-1) \geq 1$ for any $j$, while $\nu_{\alpha} (e) = 0$, we conclude that 
\begin{equation*}
\nu_{\alpha} (\eta^e-1) = \nu_{\alpha} (\eta-1).
\end{equation*}

Now suppose that $e = l$. Then

\begin{eqnarray*}
\eta^{l-1}+ \dots + \eta +1 & = & \sum_{j=1}^{l-1} (\eta^j-1) + l \\
& = & (\eta-1) \cdot \sum_{j=1}^{l-1} \left( \sum_{k=0}^{j-1} \eta^k \right) + l\\
& = & (\eta-1) \cdot \sum_{j=1}^{l-1} \left( \sum_{k=0}^{j-1} (\eta^k-1) + j \right) + l\\
& = & (\eta-1) \cdot \left[ \sum_{j=1}^{l-1} \left( \sum_{k=0}^{j-1} (\eta^k-1) \right) + l \cdot \frac{l-1}{2} \right] + l.\\
\end{eqnarray*}
Since
\begin{displaymath}
\nu_{\alpha} (\eta-1) \geq 1 \quad \text{ and } \quad \nu_{\alpha} \left[ \displaystyle\sum_{j=1}^{l-1} \left( \displaystyle\sum_{k=0}^{j-1} (\eta^k-1) \right) + l \cdot \frac{l-1}{2} \right] \geq 1,
\end{displaymath}
we have that 
\begin{equation*}
\nu_{\alpha} \left( (\eta-1) \cdot \left[ \sum_{j=1}^{l-1} \left( \sum_{k=0}^{j-1} (\eta^k-1) \right) + l \cdot \frac{l-1}{2} \right] \right) \geq 2.
\end{equation*}
Hence
\begin{equation*}
\nu_{\alpha} (\eta^{l-1}+ \dots + \eta +1) = 1,
\end{equation*}
because $\nu_{\alpha} (l) = 1$.

Therefore $\nu_{\alpha} (\eta^l-1) = \nu_{\alpha} (\eta-1) + 1$.
\end{proof}

\section{The iterative construction of irreducible polynomials}\label{sec_it_proc}
For any polynomial $g(x)$ of positive degree defined over $\F_{{p^n}}$ we can consider its transform
\begin{equation*}
g^r (x) := (b(x))^{\deg(g)} \cdot g(r(x)). 
\end{equation*}

\subsection{The iterative procedure}
Consider a monic irreducible polynomial $f \in \F_{{p^n}} [x]$ of positive degree $d$. 

We set $i:=0$, $f_0:=f$ and enter Sub-procedure 1.

\textbf{Sub-procedure 1.}
\begin{itemize} 
\item \emph{Step 1:} set $f_{i+1}$ equal to one of the monic irreducible factors of $f_i^r$ in $\F_{{p^n}} [x]$.
\item \emph{Step 2:} if $f_{i+1}$ has degree greater than $2d$, then set $i:=i+1$ and go to Sub-procedure 2. Otherwise set $i:=i+1$ and go to Step 1.
\end{itemize}

\textbf{Sub-procedure 2.}
\begin{itemize} 
\item \emph{Step 1:} set $f_{i+1} := f_i^r$.
\item \emph{Step 2:} set $i:=i+1$ and go to Step 1.
\end{itemize}

\subsection{Analysis of the iterative procedure}\label{sec_ana}
Let $x_0 \in \F_{p^{dn}}$ be a root of $f_0$. Then there exists $y_0 \in \F_{q}$ such that $(x_0, y_0) \in E(\F_{q})$. Therefore $x_0$ belongs to the level $\tilde{j} \leq k_0$ of a tree $T_r^q (\tilde{x})$ rooted in an element $\tilde{x}$ (possibly $\tilde{x} = x_0$) belonging to $\Pro(\F_q)$ of the graph $G_r^q$. Moreover, according to Theorem \ref{tree_depth}, such a tree has depth $k_0$. 

By construction, one of the predecessors of $x_0$ in $G_r^q$ is a root of $f_1$.  

We consider separately two cases.
\begin{itemize}
\item \emph{Case $1$:} one predecessor of $x_0$ in $T_r^q (\tilde{x})$ is a root of $f_1$. Let $T(\tilde{x})$ be the (infinite) tree growing on $\tilde{x}$ and having vertices in $\Pro (\overline{\F_q})$. By construction one of the direct predecessors of $x_i$ in $T(\tilde{x})$ is a root of $f_{i+1}$ for any $i \geq 0$. Hence, for any $i \geq 0$, we define $x_{i+1}$ equal to such a root.

For any $i \leq k - \tilde{j}$ we have that $f_i$ has degree $d$ or $2d$ because $x_i$ is the $x$-coordinate of a rational point in $E(\F_q)$.

Let $j := k_0 - \tilde{j}$. Then $x_{j}$ is a leaf of $T_r^q (\tilde{x})$ and $f_j$ has degree $\tilde{d}$, where $\tilde{d} \in \{d, 2d \}$, while $x_{j+1}$ is the $x$-coordinate of a rational point in $E(\F_{q^e})$ for some positive integer $e$. As a consequence of Lemma \ref{val_lem} we can say that $e = l$. Indeed, $\nu_{\alpha} (\delta_0^e - 1) = \nu_{\alpha} (\delta_0-1)$ for any $e < l$.

More in general, for any integer $h \geq 0$ we have that $f_{j+h}$ is irreducible of degree $\tilde{d} l^{h}$ in $\F_{{p^n}} [x]$.

\item \emph{Case $2$:} no predecessor of $x_0$ in $T_r^q (\tilde{x})$ is a root of $f_1$. This happens when the only predecessor of $x_0$ in $G_r^q$ which is a root of $f_1$ is $r$-periodic. Since $T_r^q (\tilde{x})$ has depth $k_0$, we can stop Sub-procedure $1$ in case that $f_{k_0+1}$ has not degree greater than $2d$ and change the definition of $f_1$.
\end{itemize}

\begin{remark} In Sub-procedure 1 some polynomial factorizations might be required. More precisely, if $x_1$ is not $r$-periodic, then we need to factor no more than $k_0$ times a polynomial of degree at most $2d$ into its irreducible factors since $T^q_r (\tilde{x})$ has depth $k_0$. 
\end{remark}

\begin{example}
Consider the elliptic curve
\begin{equation*}
E : y^2 = x^3 + 56 x +34
\end{equation*}
defined over $\F_{83}$. This curve is the reduction modulo $83$ of the elliptic curve having Cremona label 5776g1 and has complex multiplication in $\Z \left[ \frac{1+ \sqrt{-19}}{2} \right]$. By means of a computational tool as Sage \cite{sage} we can check that such a curve is ordinary over $\F_{83}$. Using Sage we can also check that an endomorphism $\alpha$ of degree $17$ is defined on $E$ over $\F_{83}$, namely the endomorphism 
\begin{equation*}
\alpha (x,y) = (r(x), y \cdot s (x))
\end{equation*}
where $r(x) = \frac{a(x)}{b(x)}$ with

\begin{eqnarray*}
a(x) & = & 4 \cdot \left( x^{17} - 32 x^{16} + 13 x^{15} + 12 x^{14} - x^{13} - 32 x^{12} + 32 x^{11} - 10 x^{10} + 8 x^9 \right. \\
& & + 2 x^8 - 8 x^7 - x^6 +x^5 + 35 x^4 + 41 x^3 + 11 x^2 +29 x +22);\\
b(x) & = & \left( x^{16} - 32 x^{15} - 39 x^{14} + 19 x^{13} + 36 x^{12} + 8 x^{11} + 41 x^{10} - 8 x^{9} - 32  x^8 \right. \\
&  & - 16 x^7 - 41 x^6 - 13 x^5 - 3 x^4 + 5 x^3 + 3 x^2 - 10x +23).
\end{eqnarray*}
 
If we define $q:=83^3$ we can check that $q \not \equiv 1 \pmod{17}$. Therefore $q^{17^i} \not \equiv 1 \pmod{17}$ for all positive integers $i$. Hence, whichever monic irreducible polynomial of degree $3$ over $\F_{83}$ we take, it is possible to construct an infinite sequence of monic irreducible polynomials $\{f_i \}_i$ where the degree of any $f_i$ is divisible by $17^k$ for some positive integer $k$. For example, define 
\begin{equation*}
f_0:= x^3 + 3 x -2.
\end{equation*} 
We notice in passing that $f_0$ is irreducible over $\F_{83}$ since it is the Conway polynomial of degree $3$ over $\F_{83}$. 

Using GAP \cite{gap} we can produce an infinite sequence of irreducible polynomials as follows.

After defining $a$ and $b$ as above, we set 
\begin{lstlisting}
f:=ConwayPolynomial(83,3);
\end{lstlisting}

Then we set 
\begin{lstlisting}
g:=f;
g:=b^Degree(g)*Value(g,r);;g:=Factors(g)[1];;Degree(g);
\end{lstlisting}
\end{example}
If we iterate the command  
\begin{lstlisting}
g:=b^Degree(g)*Value(g,r);;g:=Factors(g)[1];;Degree(g);
\end{lstlisting}
we  notice that the degree of $g$ is always $3$. This is due to the fact that, in the case the irreducible polynomials $g$ we are going to construct have roots which are periodic with respect to the map $r$, the degree of the polynomials never increases. To fix up the issue we must re-initialize 
\begin{lstlisting}
g:=f;
\end{lstlisting}
and then set
\begin{lstlisting}
g:=b^Degree(g)*Value(g,r);;g:=Factors(g)[2];;Degree(g);
\end{lstlisting}
In so doing we get that the polynomial $g$ has degree $6$. We define such a polynomial as $f_1$. Then, if we set
\begin{lstlisting}
i:=1;
g:=b^Degree(f_i)*Value(f_i,r);;
i:=i+1;;
f_i:=Factors(g)[1];;
Degree(f_i);
\end{lstlisting}
we notice that the degree of $f_2$ is $102$, namely $17 \cdot 6$. From now on there is no need of further factorizations and at every iteration of the commands
\begin{lstlisting}
g:=b^Degree(f_i)*Value(f_i,r);;
i:=i+1;;
f_i:=g;;
\end{lstlisting}
we get an irreducible polynomial $f_i$ of degree $6 \cdot 17^{i-1}$. 

\bibliography{Refs}

\end{document}